\newtheorem{theorem}{Theorem}[section]
\newtheorem{corollary}[theorem]{Corollary}
\newtheorem{lemma}[theorem]{Lemma}
\numberwithin{equation}{section}
\newcommand{\R}{\mathbb{R}}
\newcommand{\N}{\mathbb{N}}
\newcommand{\C}{\mathbb{C}}
\newcommand{\T}{\mathbb{T}}
\newcommand{\D}{\mathbb{D}}
\newcommand{\DB}{\overline{\D}}
\newcommand{\RD}{\partial \D}
\newcommand{\ov}{\overline}
\newcommand{\dis}{\displaystyle}
\newcommand{\norm}[1]{\left\Vert#1\right\Vert}
\newcommand{\Notequiv}{/\kern-.6em\hbox{$\equiv$} }
\begin{document}

\rightline{\small\tt Final version, October 12, 2007}

\title[Inequalities for products of polynomials I]
{Inequalities for products of polynomials I}%
\author{I. E. Pritsker and S. Ruscheweyh}%

\address{Department of Mathematics, 401 Mathematical Sciences, Oklahoma State
University, Stillwater, OK 74078-1058, U.S.A.}%
\email{igor@math.okstate.edu}

\address{Institut f\"ur Mathematik, Universit\"at W\"urzburg, Am Hubland,
97074 W\"urzburg, Germany}%
\email{ruscheweyh@mathematik.uni-wuerzburg.de}

\thanks{Research of I.P. was partially supported by the National Security
Agency (grant H98230-06-1-0055), and by the Alexander von Humboldt
Foundation. S.R. acknowledges partial support from the
German-Israeli Foundation (grant G-809-234.6/2003), from FONDECYT
(grants 1040366 and 7040069) and from DGIP-UTFSM (grant 240104). }%
\subjclass[2000]{Primary 30C10; Secondary 30C85, 31A15}%
\keywords{Polynomials, products, factors, uniform norm, logarithmic
capacity, equilibrium measure, subharmonic function, Fekete points}%



\begin{abstract}

We study inequalities connecting the product of uniform norms of
polynomials with the norm of their product. This circle of problems
include the Gelfond-Mahler inequality for the unit disk and the
Kneser-Borwein inequality for the segment $[-1,1]$. Furthermore, the
asymptotically sharp constants are known for such inequalities over
arbitrary compact sets in the complex plane. It is shown here that
this best constant is smallest  (namely: 2) for a disk. We also
conjecture that it takes its largest value for a segment, among all
compact connected sets in the plane.

\end{abstract}

\maketitle


\section{The problem and its history} \label{sec1}

Let $E$ be a compact set in the complex plane ${\C}$.  For a function
$f:E\rightarrow \C$ define the
uniform (sup) norm as follows:
$$\| f \|_E = \sup_{z \in E} |f(z)|.$$
Clearly $\norm{f_1f_2}_E \le \norm{f_1}_E \norm{f_2}_E,$ but this
inequality is not reversible, in general, not even with a constant
factor in front of the right hand side. Indeed, $\norm{f_1}_E
\norm{f_2}_E \le C \norm{f_1f_2}_E$ does not hold for functions with
disjoint supports in $E$, for example. However, the situation is
quite different for algebraic polynomials $\{ p_k (z) \}_{k =1}^m$
and their product $p(z) := \prod_{k =1}^m p_k (z).$ Polynomial
inequalities of the form
\begin{equation} \label{1.1}
\prod_{k =1}^m \| p_k \|_E \leq C \| p \|_E,
\end{equation}
exist and are readily available.  One of the first results in this
direction is due to Kneser \cite{Kn}, for $E = [-1,1]$ and $m =2$
(see also Aumann \cite{Au}), who proved that
\begin{equation}  \label{1.2}
\| p_1 \|_{[-1,1]} \|p_2 \|_{[-1,1]} \leq K_{\ell ,n} \| p_1 p_2
\|_{[-1,1]}, \quad \deg p_1 = \ell,\ \deg p_2 =n-\ell,
\end{equation}
where
\begin{equation}  \label{1.3}
K_{\ell ,n} := 2^{n -1} \prod_{k =1}^{\ell} \left( 1 + \cos \frac{2k
-1}{2n} \pi \right) \prod_{k =1}^{n - \ell} \left( 1 + \cos \frac{2k
-1}{2n} \pi \right).
\end{equation}
Note that equality holds in (\ref{1.2}) for the Chebyshev
polynomial\\ $t(z) = \cos n \arccos z = p_1 (z) p_2 (z)$, with a
proper choice of the factors $p_1 (z)$ and $p_2 (z)$.  P. B. Borwein
\cite{Bor} generalized this to the multifactor inequality
\begin{equation} \label{1.4}
\prod_{k =1}^m \| p_k \|_{[-1,1]} \leq 2^{n -1} \prod_{k =1}^{[
\frac{n}{2} ]} \left( 1 + \cos \frac{2k -1}{2n} \pi \right)^2 \| p
\|_{[-1,1]}.
\end{equation}
He also showed that
\begin{equation} \label{1.5}
2^{n -1} \prod_{k =1}^{[ \frac{n}{2} ]} \left( 1 + \cos \frac{2k
-1}{2n} \pi \right)^2 \sim (3.20991 \ldots )^n \mbox{ as } n
\rightarrow \infty.
\end{equation}

A different version of inequality (\ref{1.1}) for $E = D$, where $D
:= \{ w: |w| \le 1 \}$ is the closed unit disk, was considered by
Gelfond \cite[p. 135]{Ge} in connection with the theory of
transcendental numbers:
\begin{equation} \label{1.6}
\prod_{k =1}^m \| p_k \|_{D} \leq e^n \| p \|_{D} .
\end{equation}
The latter inequality was improved by Mahler \cite{Ma1}, who
replaced $e$ by $2$:
\begin{equation} \label{1.7}
\prod_{k =1}^m \| p_k \|_{D} \leq 2^n \| p \|_{D} .
\end{equation}
It is easy to see that the base $2$ cannot be decreased, if $m =n$
and $n \rightarrow \infty$.  However, (\ref{1.7}) has recently been
further improved in two directions. D. W. Boyd \cite{Boy1, Boy2}
showed that, given the number of factors $m$ in (\ref{1.7}), one has
\begin{equation} \label{1.8}
\prod_{k =1}^m \| p_k \|_{D} \leq (C_m)^n \| p \|_{D},
\end{equation}
where
\begin{equation} \label{1.9}
C_m := \exp \left( \frac{m}{\pi} \int_0^{\pi/m} \log \left(2 \cos
\frac{t}{2}\right) dt \right)
\end{equation}
is asymptotically best possible for {\em each fixed} $m$, as $n
\rightarrow \infty$. Kro\'{o} and Pritsker \cite{KP} showed that,
for any $m \leq n,$
\begin{equation} \label{1.10}
\prod_{k =1}^m \| p_k \|_{D} \leq 2^{n -1} \| p \|_{D},
\end{equation}
where equality holds in (\ref{1.10}) for {\it each} $n \in {\N}$,
with $m =n$ and $p(z) = z^n -1$.

Inequalities (\ref{1.2})-(\ref{1.10}) clearly indicate that the
constant $C$ in (\ref{1.1}) grows exponentially fast with $n$, with
the base for the exponential depending on the set $E$. A natural
general problem arising here is to find the {\it smallest} constant
$M_E > 0,$ such that
\begin{equation} \label{1.11}
\prod_{k =1}^m \| p_k \|_E \leq M_E^n \| p \|_E
\end{equation}
for arbitrary algebraic polynomials $\{ p_k (z) \}_{k =1}^m$ with
complex coefficients, where $p(z) = \prod_{k =1}^m p_k (z)$ and $n =
\deg p$. The solution of this problem is based on the logarithmic
potential theory (cf. \cite{Ts} and \cite{Ra}). Let ${\rm cap}(E)$
be the {\it logarithmic capacity} of a compact set $E \subset {\C}$.
For $E$ with ${\rm cap}(E)>0$, denote the {\it equilibrium measure}
of $E$  by $\mu_E$. We remark that $\mu_E$ is a positive unit Borel
measure supported on $\partial E$ (see \cite[p. 55]{Ts}). Define
\begin{equation} \label{1.12}
d_E(z) := \max_{t \in E} |z -t|, \qquad z \in {\C},
\end{equation}
which is clearly a positive and continuous function in ${\C}$. It is
easy to see that the logarithm of this distance function is
subharmonic in $\C.$ Furthermore, it has the following integral
representation
\[
\log d_E(z) = \int \log |z -t| d \sigma_E(t), \quad z \in {\C} ,
\]
where $\sigma_E$ is a positive unit Borel measure in ${\C}$ with
unbounded  support, see Lemma 5.1 of \cite{Pr1} and \cite{LP01}. For
further in-depth analysis of the representing measure $\sigma_E$, we
refer to the recent paper of Gardiner and Netuka \cite{GN}. This
integral representation is the key fact used by the first author to
prove the following result \cite{Pr1}.

\begin{theorem} \label{thm1.1}
Let $E \subset {\C}$ be a compact set, ${\rm cap} (E) >0$.  Then the
best constant $M_E$ in {\rm (\ref{1.11})} is given by
\begin{equation} \label{1.13}
M_E = \frac{\exp\left(\dis\int \log d_E(z) d \mu_E (z)\right)}{{\rm
cap} (E)} .
\end{equation}
\end{theorem}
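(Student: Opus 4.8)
The assertion is that the quantity $M_E$ on the right-hand side of (\ref{1.13}) is the least constant for which (\ref{1.11}) holds for all admissible families of factors; equivalently, (\ref{1.11}) is true with this $M_E$, and it fails as soon as $M_E$ is replaced by a smaller number. I would prove these two statements separately. For the first, one reduces to a single polynomial: writing $p(z)=c\prod_{j=1}^{n}(z-z_j)$ and distributing its zeros among the factors, $p_k(z)=c_k\prod_{z_j\in Z_k}(z-z_j)$ with $\prod_k c_k=c$, the elementary bound $|z-z_j|\le d_E(z_j)$ for $z\in E$ gives $\prod_{k=1}^{m}\|p_k\|_E\le|c|\prod_{j=1}^{n}d_E(z_j)$, so it suffices to show $|c|\prod_{j=1}^{n}d_E(z_j)\le M_E^{\,n}\|p\|_E$ for every polynomial $p$ of degree $n$. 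Taking logarithms and using that $\sigma_E$ is a unit measure, the representation $\log d_E(z)=\int\log|z-t|\,d\sigma_E(t)$, and $|p(t)|=|c|\prod_j|t-z_j|$, the left-hand side equals $\int\log|p(t)|\,d\sigma_E(t)$, so the required inequality becomes
\[
\int\log|p(t)|\,d\sigma_E(t)\ \le\ n\log M_E+\log\|p\|_E.
\]

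To establish this I would invoke the Bernstein--Walsh inequality $|p(t)|\le\|p\|_E\exp\big(n\,g_E(t)\big)$, where $g_E$ is the Green function of the unbounded component of $\C\setminus E$ with pole at infinity, extended by $0$ on the complement; integrating its logarithm against $\sigma_E$ reduces the claim to $\int g_E\,d\sigma_E\le\log M_E$. Now by Frostman's theorem the potential $U^{\mu_E}(t):=\int\log\frac1{|t-s|}\,d\mu_E(s)$ satisfies $U^{\mu_E}\le\log\frac1{{\rm cap}(E)}$ on all of $\C$, while $g_E=\log\frac1{{\rm cap}(E)}-U^{\mu_E}$ on the unbounded component and $g_E=0\le\log\frac1{{\rm cap}(E)}-U^{\mu_E}$ elsewhere, so $g_E\le\log\frac1{{\rm cap}(E)}-U^{\mu_E}$ throughout $\C$. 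Integrating against $\sigma_E$, applying Fubini's theorem, and using the representation of $\log d_E$ once more,
\[
\int g_E\,d\sigma_E\ \le\ \log\frac1{{\rm cap}(E)}+\iint\log|t-s|\,d\mu_E(s)\,d\sigma_E(t)=\log\frac1{{\rm cap}(E)}+\int\log d_E\,d\mu_E=\log M_E,
\]
which completes the proof that (\ref{1.11}) holds with the constant $M_E$.

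For the opposite direction I would test (\ref{1.11}) on linear factors built from Fekete points. Let $a_{n,1},\dots,a_{n,n}$ be a set of $n$ Fekete points of $E$, put $p_k(z)=z-a_{n,k}$ and $p=\prod_{k=1}^{n}p_k$, so that $\prod_{k=1}^{n}\|p_k\|_E=\prod_{k=1}^{n}d_E(a_{n,k})$. Two classical facts about Fekete points then finish the argument: their normalized counting measures converge weakly$^{*}$ to $\mu_E$, whence $\frac1n\sum_k\log d_E(a_{n,k})\to\int\log d_E\,d\mu_E$ (the points lie in the fixed compact set $E$ and $\log d_E$ is continuous), and the Fekete polynomials satisfy $\|p\|_E^{1/n}\to{\rm cap}(E)$. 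Therefore $\big(\prod_{k=1}^{n}\|p_k\|_E/\|p\|_E\big)^{1/n}\to M_E$, so no constant smaller than $M_E$ can serve in (\ref{1.11}); together with the previous part this shows that $M_E$ is exactly the best constant.

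The step I expect to require the most care is the handling of the representing measure $\sigma_E$, whose support is unbounded: one needs its finite logarithmic moment $\int\log^{+}|t|\,d\sigma_E(t)<\infty$ --- already implicit in the convergence of the integral $\int\log|z-t|\,d\sigma_E(t)$ that defines $\log d_E$ --- both to integrate the Bernstein--Walsh estimate against $\sigma_E$ and to justify the interchange of integrations above, and one uses Frostman's global bound $U^{\mu_E}\le\log\frac1{{\rm cap}(E)}$ for the pointwise estimate on $g_E$. These properties of $\sigma_E$ and $\mu_E$, supplied by Lemma~5.1 of \cite{Pr1}, the analysis of the representing measure in \cite{GN}, and standard potential theory (cf. \cite{Ts,Ra}), are the genuine analytic input; granted them, the upper bound is bookkeeping with logarithmic potentials and the lower bound is the standard Fekete-point computation.
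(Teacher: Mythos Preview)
The paper does not actually prove Theorem~\ref{thm1.1}; it quotes the result from \cite{Pr1} and only says that the integral representation $\log d_E(z)=\int\log|z-t|\,d\sigma_E(t)$ is ``the key fact'' used there. So there is no in-paper proof to compare against directly. That said, your argument is correct and is exactly in the spirit of \cite{Pr1}: the upper bound via the Riesz representation of $\log d_E$ combined with Bernstein--Walsh, and the lower bound via Fekete points, are the two standard halves. One small remark: the inequality $\int g_E\,d\sigma_E\le\log M_E$ that you derive is in fact an \emph{equality}; the present paper establishes precisely $\log M_E=\int g_E(t,\infty)\,d\sigma_E(t)$ in the proof of Theorem~\ref{thm2.1}, by the same Fubini manipulation you carry out (and using the identity $g_E=-\log\textup{cap}(E)-U^{\mu_E}$ rather than the one-sided Frostman bound). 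Your cautions about the unbounded support of $\sigma_E$ and the need for a finite logarithmic moment are well placed and are exactly the technical points handled in Lemma~5.1 of \cite{Pr1} and in \cite{LP01,GN}.
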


Theorem \ref{thm1.1} is applicable to any compact set with a
connected component consisting of more than one point (cf. \cite[p.
56]{Ts}). In particular, if $E$ is a continuum, i.e., a connected
set, then we obtain a simple universal bound for $M_E$  \cite{Pr1}:

\begin{corollary} \label{cor1.2}
Let $E \subset \C$ be a bounded continuum (not a single point). Then
we have
\begin{equation} \label{1.14}
M_E \leq \frac{{\rm diam}(E)}{{\rm cap} (E)} \leq 4,
\end{equation}
where ${\rm diam}(E)$ is the Euclidean diameter of the set $E$.
\end{corollary}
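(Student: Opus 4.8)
The plan is to derive the first inequality $M_E \le \diam(E)/\cap(E)$ directly from the formula in Theorem~\ref{thm1.1}, and then establish the second inequality $\diam(E)/\cap(E) \le 4$ from the classical estimate relating capacity and diameter for a continuum. For the first step, I would start from \eqref{1.13} and observe that the numerator involves $\int \log d_E(z)\, d\mu_E(z)$, where $\mu_E$ is supported on $\partial E \subset E$. The key elementary fact is that for any two points $z, t \in E$ we have $|z - t| \le \diam(E)$, hence $d_E(z) = \max_{t \in E}|z-t| \le \diam(E)$ for every $z \in E$; in particular this holds for $\mu_E$-almost every $z$. Since $\mu_E$ is a probability measure, integrating the inequality $\log d_E(z) \le \log \diam(E)$ yields $\int \log d_E(z)\, d\mu_E(z) \le \log \diam(E)$, and exponentiating gives $\exp\left(\int \log d_E(z)\, d\mu_E(z)\right) \le \diam(E)$. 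Dividing by $\cap(E)$ produces the first inequality in \eqref{1.14}.

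For the second inequality, I would invoke the classical potential-theoretic bound $\cap(E) \ge \diam(E)/4$ valid for any bounded continuum $E$ that is not a single point. This is a standard result (see, e.g., \cite[p.~56]{Ts} or the monograph \cite{Ra}): it follows from the monotonicity of capacity together with the normalization that a line segment of length $L$ has capacity $L/4$, since any continuum of diameter $d$ contains two points at distance $d$ and, after a rotation, its orthogonal projection onto the line through those two points is a connected set (an interval) of length at least $d$; capacity does not decrease under projection onto a line, and a segment of length $d$ has capacity $d/4$. Rearranging $\cap(E) \ge \diam(E)/4$ gives $\diam(E)/\cap(E) \le 4$, completing the chain \eqref{1.14}.

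The only genuinely delicate point is the justification of $\cap(E) \ge \diam(E)/4$, and whether to prove it or simply cite it. Since this is a well-known classical inequality (attributed in essence to the behavior of capacity under projections, and stated in standard references on logarithmic potential theory), I would cite it rather than reproduce the proof, noting that the extremal case is precisely the segment — which foreshadows the paper's conjecture that the segment maximizes $M_E$ among connected sets. Everything else is the routine observation that $\mu_E$ lives on $E$ and $d_E$ is bounded by the diameter there, so no real obstacle remains once Theorem~\ref{thm1.1} is in hand.
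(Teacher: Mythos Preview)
Your argument is correct in outline and is precisely the derivation the paper has in mind: Corollary~\ref{cor1.2} is not proved in this paper but is quoted from \cite{Pr1} as an immediate consequence of Theorem~\ref{thm1.1}, and your two steps (bound $d_E(z)\le\diam(E)$ on $\supp\mu_E\subset E$ and then invoke the classical continuum bound $\textup{cap}(E)\ge\diam(E)/4$) are exactly what that derivation amounts to.

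One slip to fix in your sketch of the second inequality: you write that ``capacity does not decrease under projection onto a line,'' but the correct (and needed) statement is the opposite---projection is a $1$-Lipschitz map, and logarithmic capacity (equivalently, transfinite diameter) does \emph{not increase} under $1$-Lipschitz maps, so $\textup{cap}(E)\ge\textup{cap}(\pi(E))$. With that correction your projection argument goes through: if $a,b\in E$ realise the diameter $d$, then every $c\in E$ lies in $D(a,d)\cap D(b,d)$, which forces the orthogonal projection of $E$ onto the line through $a,b$ to be exactly the segment $[a,b]$; hence $\textup{cap}(E)\ge\textup{cap}([a,b])=d/4$. Since you intend to cite the inequality anyway (it is indeed standard; see, e.g., \cite{Ra}), this is a minor point, but the misstated direction would be confusing if left as is.
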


On the other hand, for non-connected sets $E$ the constants $M_E$ can be
arbitrarily large. For
example, consider $E_k=[-\sqrt{k+4},-\sqrt{k}] \cup
[\sqrt{k},\sqrt{k+4}]$, so that cap$(E_k)=1$ \cite{Ra} and
\[
M_E = \exp\left(\int \log d_{E_k}(z)\,d\mu_{E_k}(z)\right) \ge
e^{\log (2\sqrt{k})} \to \infty\quad \mbox{as } k\to\infty.
\]

For the closed unit disk $D$, we have that ${\rm cap}(D) =1$
\cite[p. 84]{Ts} and that
\begin{equation} \label{1.15}
d\mu_{D} = \frac{d\theta}{2 \pi},
\end{equation}
where $d \theta$ is the arclength on $\partial D$.  Thus Theorem
\ref{thm1.1} yields
\begin{equation} \label{1.16}
M_{D} = \exp\left(\frac{1}{2 \pi} \int_0^{2 \pi} \log d_{D}(e^{i
\theta})\ d \theta\right) = \exp\left(\frac{1}{2 \pi} \int_0^{2 \pi}
\log 2\ d \theta\right) =2,
\end{equation}
so that we immediately obtain Mahler's inequality (\ref{1.7}).

If $E = [-1,1]$ then ${\rm cap}([-1,1]) = 1/2$ and
\begin{equation} \label{1.17}
d\mu_{[-1,1]} = \frac{dx}{\pi \sqrt{1 -x^2}} , \quad x \in [-1,1],
\end{equation}
which is the Chebyshev (or arcsin) distribution (see \cite[p.
84]{Ts}). Using Theorem \ref{thm1.1}, we obtain
\begin{eqnarray} \label{1.18}
M_{[-1,1]} & = & 2\exp\left(\frac{1}{\pi} \int_{-1}^1 \frac{\log
d_{[-1,1]}(x)}{\sqrt{1 -x^2}} dx\right) = 2\exp\left(\frac{2}{\pi}
\int_0^1 \frac{\log (1 +x)}{\sqrt{1 -x^2}} dx \right) \nonumber \\
& = & 2\exp\left(\frac{2}{\pi} \int_0^{\pi/2} \log (1 + \sin t) dt
\right) \approx 3.2099123,
\end{eqnarray}
which gives the asymptotic version of Borwein's inequality
(\ref{1.4})-(\ref{1.5}).

Considering the above analysis of Theorem \ref{thm1.1}, it is
natural to conjecture that the sharp universal bounds for $M_E$ are
given by
\begin{equation} \label{1.19}
2=M_{D} \le M_E \le M_{[-1,1]} \approx 3.2099123,
\end{equation}
for any bounded non-degenerate continuum $E$, see \cite{Pr3}.

It follows directly from the definition that $M_E$ is invariant with
respect to the similarity transformations of the plane. Thus  we can
normalize the problem by setting ${\rm cap} (E)=1$. Thus,
equivalently, we want to find the maximum and the minimum of the
functional
\begin{equation} \label{1.20}
\tau(E):=\int \log d_E(z) d \mu_E (z)
\end{equation}
over all compact connected sets $E$ in the plane satisfying the
above normalization. These questions are addressed in Section 2 of
the paper. Section 3 discusses a more refined version of our problem
on the best constant in \eqref{1.1}. All proofs are given in Section
4.

In the forthcoming paper \cite{PR}, we consider various improved
bounds of the constant $M_E$, e.g., bounds for rotationally
symmetric sets. From a different perspective, the results of Boyd
\eqref{1.8}-\eqref{1.9} suggest that for some sets the constant
$M_E$ can be replaced by a smaller one, if the number of factors is
fixed. We characterize such sets in \cite{PR}, and find the improved
constant.

\medskip
The problems considered in this paper have many applications in
analysis, number theory and computational mathematics. We mention
specifically applications in transcendence theory (see Gelfond
\cite{Ge}), and in designing algorithms for factoring polynomials
(see Boyd \cite{Boy3} and Landau \cite{La}). A survey of the results
involving norms different from the sup norm (e.g., Bombieri norms)
can be found in \cite{Boy3}. For polynomials in several variables,
see the results of Mahler \cite{Ma2} for the polydisk, of Avanissian
and Mignotte \cite{AM} for the unit ball in ${\C}^k$. Also, see
Beauzamy and Enflo \cite{BeEn}, and Beauzamy, Bombieri, Enflo and
Montgomery \cite{BBEM} for multivariate polynomials in different
norms.

\medskip
{\bf Acknowledgements.} The authors wish to express their gratitude
to Richard Laugesen for several helpful discussions about these
problems. Alexander Solynin communicated to the first author a
sketch of proof for the inequality $M_E \ge 2$ for connected sets.
We would like to thank him for the kind permission to use his
argument in the proof of Theorem \ref{thm2.5}. This paper was
written while the first author was visiting the University of
W\"urzburg as a Humboldt Foundation Fellow. He would like to thank
the Department of Mathematics and the Function Theory research group
for their hospitality.

\section{Sharp bounds for the constant $M_E$} \label{sec2}

We study bounds for the constant $M_E$ in this section, where
$E\subset\C$ is a compact set satisfying ${\rm cap} (E)>0.$ Our main
goal here is to prove \eqref{1.19}. It is convenient to first give
some general observations on the properties of $M_E$.

\begin{theorem} \label{thm2.1}
Let $I \subset E$ be compact sets in $\C$, ${\rm cap} (I) >0$.
Denote the unbounded components of $\ov\C\setminus E$ and
$\ov\C\setminus I$ by $\Omega_E$ and $\Omega_I$. If $d_E(z)=d_I(z)$
for all $z \in \partial \Omega_I$ then $M_E \le M_I,$ with equality
holding only when $\textup{cap}(\Omega_I\setminus \Omega_E)=0.$
\end{theorem}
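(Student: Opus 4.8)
The plan is to express both $M_E$ and $M_I$ via Theorem~\ref{thm1.1} and compare them through the equilibrium potentials. Write $M_E = \exp(\tau(E))/\mathrm{cap}(E)$ with $\tau(E)=\int \log d_E \, d\mu_E$, and similarly for $I$. Since $I\subset E$, we have the classical domination of capacities and a relation between equilibrium measures coming from the fact that the Green function $g_{\Omega_E}(z,\infty)$ of the outer domain is the balayage-type object controlling $\mu_E$. The key potential-theoretic identity I would use is that for any compact $K$ with $\mathrm{cap}(K)>0$,
\[
\int \log d_K(z)\, d\mu_K(z) - \log \mathrm{cap}(K) = \int u_K \, d\sigma_K,
\]
or more directly that $\log M_K = \int \big(\log d_K(z) - g_{\Omega_K}(z,\infty) - \log\mathrm{cap}(K)\big)d\mu_K(z)$ rewritten using the representing measure $\sigma_K$ of $\log d_K$ and Fubini. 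The cleaner route: using $\log d_K(z)=\int \log|z-t|\,d\sigma_K(t)$ and the Frostman identity $\int \log|z-t|\,d\mu_K(z) = g_{\Omega_K}(t,\infty) + \log\mathrm{cap}(K)$ valid for all $t$ (with $g_{\Omega_K}(\cdot,\infty)$ extended by $0$ on $K$), Fubini gives
\[
\log M_K = \int g_{\Omega_K}(t,\infty)\, d\sigma_K(t).
\]

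With this formula the comparison becomes transparent. First I would record that $\mathrm{supp}\,\sigma_K$ is unbounded and that $\sigma_K$ is a positive unit measure. Next, the hypothesis $d_E(z)=d_I(z)$ for all $z\in\partial\Omega_I$ is the crucial input: since $\log d_E$ and $\log d_I$ are both subharmonic on $\C$, harmonic off the supports of their representing measures, and agree on $\partial\Omega_I$, and since $\log d_E \le \log d_I$ everywhere (because $I\subset E$ forces $d_I\le d_E$ — wait, actually $I\subset E$ gives $d_I(z)\le d_E(z)$, so $\log d_E\ge \log d_I$), I would argue that the two representing measures satisfy $\sigma_E = \sigma_I$ on the open set $\Omega_I$ away from... more carefully: $\log d_E - \log d_I \ge 0$ is subharmonic minus subharmonic, but on $\Omega_I$ the function $\log d_I$ is harmonic, so $\log d_E - \log d_I$ is subharmonic on $\Omega_I$, vanishes on $\partial\Omega_I$, hence by the maximum principle for subharmonic functions on the (possibly unbounded) domain $\Omega_I$ it is $\le$ its boundary values $=0$; combined with $\ge 0$ this forces $\log d_E=\log d_I$ on all of $\Omega_I$. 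Therefore $\sigma_E$ and $\sigma_I$ have the same restriction to $\Omega_I$, and in particular $\sigma_E - \sigma_I$ is supported on $\overline{\C\setminus\Omega_I} = \C\setminus\Omega_I$ together with $\partial\Omega_I$.

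Now I would finish by comparing $\log M_E = \int g_{\Omega_E}(\cdot,\infty)\,d\sigma_E$ with $\log M_I = \int g_{\Omega_I}(\cdot,\infty)\,d\sigma_I$. Since $I\subset E$, we have $\Omega_E\supset\Omega_I$ and the monotonicity of Green functions gives $g_{\Omega_E}(z,\infty)\le g_{\Omega_I}(z,\infty)$ for all $z$ (both extended by $0$ on the complements). Split $\int g_{\Omega_E}d\sigma_E = \int_{\Omega_I} g_{\Omega_E}\,d\sigma_I + \int_{\C\setminus\Omega_I} g_{\Omega_E}\,d\sigma_E$. On $\C\setminus\Omega_I$ the integrand $g_{\Omega_E}(z,\infty)$ need not vanish only on the set $\Omega_E\setminus\Omega_I$, which is exactly $\Omega_E\cap(\C\setminus\Omega_I)$; elsewhere on $\C\setminus\Omega_E$ it is $0$. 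On $\Omega_I$, bound $g_{\Omega_E}\le g_{\Omega_I}$. Assemble:
\[
\log M_E \le \int_{\Omega_I} g_{\Omega_I}\,d\sigma_I + \int_{\Omega_E\setminus\Omega_I} g_{\Omega_E}\,d\sigma_E \le \log M_I + \int_{\Omega_E\setminus\Omega_I} g_{\Omega_E}(z,\infty)\,d\sigma_E(z),
\]
using $g_{\Omega_I}=0$ on $\C\setminus\Omega_I$ to extend the first integral to all of $\C$. The remaining error term is nonnegative, and it vanishes iff $\sigma_E(\Omega_E\setminus\Omega_I)=0$ (since $g_{\Omega_E}(\cdot,\infty)>0$ throughout $\Omega_E$); and $\sigma_E$ assigns zero mass to $\Omega_E\setminus\Omega_I$ precisely when $\mathrm{cap}(\Omega_I\setminus\Omega_E)=0$ — here I would invoke that $\mathrm{supp}\,\sigma_E$ together with the structure of $\sigma_E$ detects exactly the "gap" $\Omega_I\setminus\Omega_E$, the fine point being that $\sigma_E$ charges a set only if it has positive capacity, and conversely the equality case in the $\log d_E=\log d_I$ comparison forces the gap to be polar. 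The main obstacle I anticipate is this last equality-case bookkeeping: pinning down that the extra mass $\int_{\Omega_E\setminus\Omega_I} g_{\Omega_E}\,d\sigma_E$ is zero if and only if $\mathrm{cap}(\Omega_I\setminus\Omega_E)=0$ requires the precise description of $\mathrm{supp}\,\sigma_E$ relative to $\partial E$ (from Lemma~5.1 of~\cite{Pr1} and~\cite{GN}) and care with polar exceptional sets in the subharmonic maximum principle on unbounded domains.
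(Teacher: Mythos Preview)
Your derivation of the key identity $\log M_K = \int g_{\Omega_K}(t,\infty)\,d\sigma_K(t)$ via Fubini and Frostman is correct and is exactly how the paper begins. The gap is in what follows: your claim that $\log d_I$ is harmonic on $\Omega_I$, and hence that $\log d_E=\log d_I$ throughout $\Omega_I$ (so $\sigma_E|_{\Omega_I}=\sigma_I|_{\Omega_I}$), is false. The Riesz measure $\sigma_I$ of $\log d_I$ has unbounded support, and that support lies in the \emph{complement} of $I$, i.e.\ largely inside $\Omega_I$; so $\log d_I$ is genuinely subharmonic (not harmonic) there, and the maximum-principle step collapses. A concrete counterexample to the conclusion: take $I=[-1,1]$ and $E=\overline{D(0,1)}$. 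For $x\in[-1,1]$ one has $d_I(x)=d_E(x)=1+|x|$, so the hypothesis holds, yet at $z=i\in\Omega_I$ we get $d_I(i)=\sqrt{2}$ while $d_E(i)=2$. Thus $d_E\ne d_I$ on $\Omega_I$ and $\sigma_E\ne\sigma_I$ there. (Incidentally, $I\subset E$ gives $\Omega_E\subset\Omega_I$, not the reverse as you wrote; with the correct containment your ``error term'' over $\Omega_E\setminus\Omega_I$ is empty, which signals that the splitting is not doing the work you want.)

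The paper's route avoids comparing $\sigma_E$ with $\sigma_I$ altogether. After the identity $\log M_E=\int g_{\Omega_E}\,d\sigma_E$, it uses the pointwise inequality $g_{\Omega_E}\le g_{\Omega_I}$ (correct, since $\Omega_E\subset\Omega_I$) to get $\log M_E\le \int g_{\Omega_I}\,d\sigma_E$, and then runs Fubini \emph{in the other direction}: $\int g_{\Omega_I}(t,\infty)\,d\sigma_E(t)=\int\log d_E(z)\,d\mu_I(z)-\log\textup{cap}(I)$. Now the hypothesis enters cleanly, because $\textup{supp}\,\mu_I\subset\partial\Omega_I$ and $d_E=d_I$ there, giving $\log M_I$. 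For the equality case one argues that equality forces $g_{\Omega_E}=g_{\Omega_I}$ on $\textup{supp}\,\sigma_E$, which is unbounded; the maximum principle then gives $g_{\Omega_E}\equiv g_{\Omega_I}$ on $\Omega_E$, whence $\textup{cap}(\Omega_I\setminus\Omega_E)=0$. This second Fubini is the missing idea in your attempt.
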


This theorem gives several interesting consequences. In particular,
we show that if the set $E$ is contained in a disk whose diameter
coincides with the diameter of $E$ then its constant $M_E$ does not
exceed that of a segment. Thus segments indeed maximize $M_E$ among
such sets. Denote the closed disk of radius $r$ centered at $z$ by
$D(z,r).$

\begin{corollary} \label{cor2.2}
Let $z,w\in E$ satisfy $\textup{diam}\,E=|z-w|$ and $[z,w]\subset
E.$ If $E\subset
D\left(\frac{z+w}{2},\frac{\textup{diam}\,E}{2}\right)$ then $M_E
\le M_{[z,w]} = M_{[-2,2]}.$
\end{corollary}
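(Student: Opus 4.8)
The plan is to deduce this from Theorem \ref{thm2.1} with $I = [z,w]$ and the given $E$. After a similarity transformation (which preserves $M_E$) we may assume $z = -2$ and $w = 2$, so that $\mathrm{cap}([z,w]) = \mathrm{cap}([-2,2]) = 1$ and $\mathrm{diam}\,E = 4$; the hypothesis becomes $I = [-2,2] \subset E \subset D(0,2)$. The unbounded component $\Omega_I$ of $\ov\C\setminus I$ is $\ov\C \setminus [-2,2]$, whose boundary is the segment $[-2,2]$ itself (traversed from both sides). So the whole task reduces to verifying the single hypothesis of Theorem \ref{thm2.1}, namely that $d_E(x) = d_I(x)$ for every $x \in [-2,2]$, and then checking that the equality case does not occur unless $E$ differs from $[-2,2]$ only by a set of capacity zero.

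For the distance functions: by definition $d_I(x) = \max_{t\in[-2,2]} |x-t| = \max(|x+2|, |x-2|) = 2 + |x|$ for $x \in [-2,2]$. Since $I \subset E$ we automatically have $d_E(x) \ge d_I(x)$. For the reverse inequality, fix $x\in[-2,2]$ and let $t \in E$ be a point achieving $d_E(x) = |x-t|$. Because $E \subset D(0,2)$ we have $|t| \le 2$. The key geometric observation is that for a point $x$ on the diameter $[-2,2]$ of the disk $D(0,2)$, the farthest point of the whole disk from $x$ is the endpoint of the diameter on the opposite side of $x$: indeed, for any $t$ with $|t|\le 2$, writing $x$ real and $t = a+bi$, one computes $|x-t|^2 = (x-a)^2 + b^2 \le (x-a)^2 + (4 - a^2) = x^2 - 2ax + 4 \le x^2 + 2|x| + 4 = (2+|x|)^2$, using $-2ax \le 2|x|\,|a| \le 2|x|\cdot 2$... more carefully, $-2ax \le 2|x|\,|a|$ and $|a|\le 2$ give $-2ax \le 4|x|$, which is too weak; instead use that the maximum of $x^2 - 2ax + 4 - a^2 + a^2$ — let me just say: maximize $(x-a)^2 + b^2$ over the disk $a^2+b^2\le 4$; the maximum is attained on the boundary circle, where it equals $4 - 2ax + x^2$, and this is maximized over $a \in [-2,2]$ at $a = -2\,\mathrm{sgn}(x)$, giving $4 + 4|x| + x^2 = (2+|x|)^2$. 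Hence $d_E(x) \le 2 + |x| = d_I(x)$, so $d_E(x) = d_I(x)$ on $\partial\Omega_I = [-2,2]$, and Theorem \ref{thm2.1} applies to give $M_E \le M_I = M_{[-2,2]}$.

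Finally, since $M_{[z,w]} = M_{[-2,2]}$ by the similarity invariance of $M$, and $[z,w] \subset E$ by hypothesis, the chain $M_E \le M_{[z,w]} = M_{[-2,2]}$ is exactly the claimed conclusion. The one point to double-check is the degenerate-equality clause of Theorem \ref{thm2.1}, but this is automatically subsumed: we only need the inequality $M_E \le M_{[z,w]}$, which the theorem delivers regardless. The main (and only real) obstacle is the elementary circle-geometry fact that the diameter endpoint is the farthest point of a disk from any point of that diameter; everything else is bookkeeping about similarity invariance and the identification of $\partial\Omega_I$ with the segment.
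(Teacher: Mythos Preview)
Your proof is correct and follows exactly the same route as the paper: apply Theorem~\ref{thm2.1} with $I=[z,w]$, verifying $d_E=d_I$ on $\partial\Omega_I=I$ and invoking similarity invariance for $M_{[z,w]}=M_{[-2,2]}$. The paper merely asserts the key equality $d_E(t)=d_I(t)$ on $I$ ``under the stated geometric conditions,'' whereas you supply the explicit computation that the farthest point of $D(0,2)$ from any $x\in[-2,2]$ is the opposite diameter endpoint; the exposition there is a bit rough (the abandoned estimate $-2ax\le 4|x|$ and the ``let me just say'' aside should be cleaned up), but the final argument via $(x-a)^2+b^2 = x^2-2ax+4$ on $a^2+b^2=4$ is sound.
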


The next results shows that the constant decreases when the set is
enlarged in a certain way.

\begin{corollary} \label{cor2.3}
Let $E^*:=\bigcap_{z\in \partial\Omega_E} D(z,d_E(z))$, where $E
\subset {\C}$ is compact, ${\rm cap} (E) >0$. If $H$ is a compact
set such that $E\subset H\subset E^*,$ then $M_H \le M_E.$ Equality
holds if and only if $\textup{cap}(\Omega_E\setminus \Omega_H)=0.$
\end{corollary}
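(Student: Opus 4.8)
The plan is to deduce this directly from Theorem 2.1 by taking $I = E$ and the larger set to be $H$. The hypothesis of Theorem 2.1 requires that $E$ be contained in $H$ (which is given), that both have positive capacity (automatic for $H \supset E$ since capacity is monotone and $\mathrm{cap}(E)>0$), and, crucially, that $d_H(z) = d_E(z)$ for all $z \in \partial\Omega_H$, where $\Omega_H$ denotes the unbounded component of $\ov\C \setminus H$. So the heart of the matter is to verify this equality of distance functions. Once it is in place, Theorem 2.1 yields $M_H \le M_E$, and the equality statement "$M_H = M_E$ iff $\mathrm{cap}(\Omega_E \setminus \Omega_H) = 0$" is exactly the equality clause of Theorem 2.1 transcribed with $I = E$.

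The main work is therefore the inclusion $\partial\Omega_H \subset \{z : d_H(z) = d_E(z)\}$. First note the trivial inequality $d_E(z) \le d_H(z)$ for all $z$, since $E \subset H$ and $d_E, d_H$ are maxima of $|z - t|$ over $E$ and $H$ respectively. For the reverse inequality on $\partial\Omega_H$, I would argue as follows. Since $H \subset E^* = \bigcap_{w \in \partial\Omega_E} D(w, d_E(w))$, every point $t \in H$ satisfies $t \in D(w, d_E(w))$ for each $w \in \partial\Omega_E$, i.e. $|t - w| \le d_E(w)$ for all $w \in \partial\Omega_E$. Now fix $z \in \partial\Omega_H$. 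I want to show $|z - t| \le d_E(z)$ for every $t \in H$. The key observation is that $\partial\Omega_H \subset \partial\Omega_E$: indeed $\Omega_H \subset \Omega_E$ (enlarging the compact set shrinks the unbounded complementary component), and since $H \subset E^*$ and one checks $\partial E^* \subset \overline{\Omega_E}$... more directly, any boundary point of $\Omega_H$ that lies in $H \subset E^*$ must lie on $\partial\Omega_E$ because $E^*$ touches $\partial\Omega_E$ exactly along the arcs where the defining disks are tangent. Actually the cleanest route: for $z \in \partial\Omega_H$ we have $z \in H \subset E^*$, and the definition of $E^*$ as an intersection of the disks $D(w,d_E(w))$, $w\in\partial\Omega_E$, combined with the fact that each such $z$ is a limit of points outside $H \supset E$, forces $z \in \partial\Omega_E$, hence $z \in \partial\Omega_E \subset \{w : |t-w|\le d_E(w)\ \forall t\in H\}$ — wait, I need $d_E(z)$, the max over $E$, not over $H$. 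Let me reststructure: for $z\in\partial\Omega_E$ and any $t\in H\subset E^*$ we have $|z-t|\le d_E(z)$ directly from $t\in D(z,d_E(z))$. Therefore $d_H(z) = \max_{t\in H}|z-t| \le d_E(z)$, giving $d_H(z)=d_E(z)$ for all $z\in\partial\Omega_E$, in particular for all $z\in\partial\Omega_H\subset\partial\Omega_E$.

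The step I expect to be the main obstacle is the inclusion $\partial\Omega_H \subset \partial\Omega_E$, which underlies the previous paragraph. It follows from $E \subset H \subset E^*$ together with the topological fact that $\partial\Omega_{E^*} = \partial\Omega_E$ (the "outer boundary" is unchanged under passing to $E^*$, since $E^*$ is precisely the region cut out by the tangent disks and adds nothing to the outer boundary). A careful justification needs: (i) $\mathrm{cap}(E) > 0 \Rightarrow \mathrm{cap}(H)>0$, so $\Omega_H$ is well defined; (ii) $\Omega_{E^*}\subset\Omega_H\subset\Omega_E$ from monotonicity of unbounded components under inclusion; (iii) $\Omega_{E^*} = \Omega_E$, equivalently $\mathrm{cap}(\Omega_E\setminus E^*)$... no — equivalently that $E^*$ does not "plug" any part of $\Omega_E$, which holds because each point of $\partial\Omega_E$ lies on the boundary circle of its own disk $D(z,d_E(z))$ and hence on $\partial E^*$. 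Squeezing (ii) with (iii) gives $\Omega_H = \Omega_E$ as far as closures/boundaries are concerned, except possibly for sets of capacity zero — which is exactly what feeds the equality clause. Modulo this point-set topology, the corollary is immediate from Theorem 2.1; I would present the distance-function identity cleanly first and relegate the boundary inclusion to a short lemma-style remark.
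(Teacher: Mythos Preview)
You have the right idea and in fact the correct argument is already contained in your proposal, but you have misread the hypothesis of Theorem~\ref{thm2.1} and this sends you off on an unnecessary and ultimately wrong detour.

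In Theorem~\ref{thm2.1} the condition is $d_E(z)=d_I(z)$ for all $z\in\partial\Omega_I$, i.e.\ on the outer boundary of the \emph{smaller} set $I$. With $I=E$ and the larger set equal to $H$, the required identity is $d_H(z)=d_E(z)$ for all $z\in\partial\Omega_E$, not for $z\in\partial\Omega_H$. You actually prove exactly this: for $z\in\partial\Omega_E$ and any $t\in H\subset E^*\subset D(z,d_E(z))$ one has $|z-t|\le d_E(z)$, hence $d_H(z)\le d_E(z)$, while $d_E(z)\le d_H(z)$ is trivial from $E\subset H$. That is precisely the paper's proof (stated as the squeeze $d_E\le d_H\le d_{E^*}$ together with $d_{E^*}(z)=d_E(z)$ on $\partial\Omega_E$), and Theorem~\ref{thm2.1} then gives both the inequality and the equality clause immediately.

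The extra topological step you flag as the ``main obstacle,'' namely $\partial\Omega_H\subset\partial\Omega_E$ (and the related claim $\Omega_{E^*}=\Omega_E$), is not only unnecessary but false in general. Take $E=[-2,2]$: then for $x\in E$ one has $d_E(x)=2+|x|$, and a short computation gives $E^*=\overline{D(0,2)}$. Choosing $H=E^*$, we get $\partial\Omega_H=\{|w|=2\}$, which is certainly not contained in $\partial\Omega_E=[-2,2]$; likewise $\Omega_{E^*}=\{|w|>2\}\neq\C\setminus[-2,2]=\Omega_E$. So drop that part entirely: once you apply Theorem~\ref{thm2.1} with the boundary condition placed on $\partial\Omega_E$, the proof is the clean two-line argument you already wrote.
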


Let conv$(H)$ be the convex hull of $H$. The operation of taking the
convex hull of a set satisfies the assumption of Corollary
\ref{cor2.3} (or Theorem \ref{thm2.1}), which gives

\begin{corollary} \label{cor2.4}
Let $V\subset\C$ be a compact set, ${\rm cap}(V)>0.$ If
$H:=\ov\C\setminus\Omega_V$ is not convex, then $M_{{\rm conv}(H)} <
M_H.$
\end{corollary}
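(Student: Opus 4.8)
The plan is to derive Corollary \ref{cor2.4} from Corollary \ref{cor2.3} (equally, from Theorem \ref{thm2.1}) by taking $E=H$ and letting $\conv(H)$ play the role of the intermediate set. Write $\Omega_H$ for the unbounded component of $\ov\C\setminus H$. Since $\ov\C\setminus H=\Omega_V$ is already connected and unbounded, we have $\Omega_H=\Omega_V$ and $\ov\C\setminus H=\Omega_H$. Moreover $V\subset H$ gives ${\rm cap}(H)\ge{\rm cap}(V)>0$, so Corollary \ref{cor2.3} is applicable to $E=H$, and $\conv(H)$ is a compact set containing $H$.

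The first step is the inclusion $\conv(H)\subset H^{*}$, where $H^{*}:=\bigcap_{z\in\partial\Omega_H}D(z,d_H(z))$. This is immediate: for every $z\in\partial\Omega_H$ one has $H\subset D(z,d_H(z))$ by the very definition of $d_H$, and since the disk $D(z,d_H(z))$ is convex this forces $\conv(H)\subset D(z,d_H(z))$; intersecting over $z\in\partial\Omega_H$ gives $\conv(H)\subset H^{*}$. Together with the trivial inclusion $H\subset\conv(H)$, Corollary \ref{cor2.3} yields $M_{\conv(H)}\le M_H$, with equality if and only if ${\rm cap}\bigl(\Omega_H\setminus\Omega_{\conv(H)}\bigr)=0$.

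It then remains to show that this exceptional capacity is \emph{positive} when $H$ is not convex. Since $\conv(H)$ is compact and convex, its complement $\ov\C\setminus\conv(H)$ is connected, so $\Omega_{\conv(H)}=\ov\C\setminus\conv(H)$; combined with $\Omega_H=\ov\C\setminus H$ this gives $\Omega_H\setminus\Omega_{\conv(H)}=\conv(H)\setminus H$. Now assume $H\ne\conv(H)$. If $\conv(H)$ has nonempty interior, then, by Milman's partial converse to the Krein--Milman theorem, every extreme point of $\conv(H)$ lies in $H$; hence if $\conv(H)\setminus H$ were contained in $\partial\conv(H)$ we would get $\textup{int}\,\conv(H)\subset H$ and therefore $H=\conv(H)$, a contradiction. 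Thus $\textup{int}\,\conv(H)\setminus H$ is a nonempty open subset of $\C$ contained in $\conv(H)\setminus H$, so ${\rm cap}(\conv(H)\setminus H)>0$. If $\conv(H)$ has empty interior, then $H$ lies on a line and $\conv(H)$ is a segment $[a,b]$; since $H\ne[a,b]$, the set $\conv(H)\setminus H$ is nonempty and relatively open in $[a,b]$, hence contains a nondegenerate subsegment, and again ${\rm cap}(\conv(H)\setminus H)>0$. In either case equality is impossible, whence $M_{\conv(H)}<M_H$.

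All the verifications above are elementary; the one point that genuinely needs care is the last, namely that $\conv(H)\setminus H$ is not merely nonempty but carries positive logarithmic capacity. This is precisely where the extreme-point argument (or, in the degenerate case, the one-dimensional reduction) is used, and it is the step I expect to be the main obstacle — everything else is bookkeeping about complementary components and the convexity of disks.
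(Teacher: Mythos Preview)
Your proof is correct and follows essentially the same route as the paper: both arguments feed the pair $H\subset\conv(H)$ into Theorem~\ref{thm2.1} (you via Corollary~\ref{cor2.3}), using that $d_H=d_{\conv(H)}$ on $\partial\Omega_H$. The paper obtains this equality by citing \cite{LP01} for the global identity $d_H\equiv d_{\conv(H)}$, whereas you give the equivalent self-contained observation $\conv(H)\subset H^{*}$ (each disk $D(z,d_H(z))$ is convex and contains $H$). You also spell out the positive-capacity step $\textup{cap}(\conv(H)\setminus H)>0$, which the paper asserts in a single clause.

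One small remark on that step: the appeal to Milman's theorem is a red herring and does not do the work you assign to it. The implication ``$\textup{int}\,\conv(H)\subset H$ $\Rightarrow$ $H=\conv(H)$'' follows simply because $H$ is closed and, for a convex set with nonempty interior, $\overline{\textup{int}\,\conv(H)}=\conv(H)$; no information about extreme points is needed. With that adjustment the argument is clean.
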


The above results help us to show that the minimum of $M_E$ is
attained for the closed unit disk $D,$ among all sets of positive
capacity (connected or otherwise).

\begin{theorem} \label{thm2.5}
Let $E \subset {\C}$ be an arbitrary compact set, ${\rm cap} (E)
>0$. Then $M_E\ge 2,$ where equality holds if and only if
$\ov\C\setminus\Omega_E$ is a closed disk.
\end{theorem}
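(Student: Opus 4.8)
\textbf{Proof proposal for Theorem \ref{thm2.5}.}
The plan is to reduce the general case to the case of a continuum and then extract the sharp lower bound $2$ from the geometry of the distance function $d_E$. First I would dispose of the reduction: since $M_E$ depends only on $\Omega_E$ (the unbounded component of $\ov\C\setminus E$) through the formula \eqref{1.13}—indeed $\mu_E$, $\mathrm{cap}(E)$ and $d_E$ restricted to $\partial\Omega_E$ are all determined by $\Omega_E$—we may replace $E$ by $\widehat E:=\ov\C\setminus\Omega_E$, its polynomial convex hull, without changing $M_E$. Using Corollary \ref{cor2.4}, passing to $\mathrm{conv}(\widehat E)$ only decreases $M_E$ (strictly unless $\widehat E$ is already convex), so it suffices to prove $M_E\ge 2$ when $E$ is a compact convex set, and to track when equality can occur.

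For a convex body $E$, normalize $\mathrm{cap}(E)=1$; then by Theorem \ref{thm1.1} we must show $\tau(E)=\int\log d_E(z)\,d\mu_E(z)\ge\log 2$. The key point (this is where Solynin's argument enters) is a pointwise comparison: for $z$ on the boundary of a convex set, the farthest-point distance $d_E(z)$ is at least $\frac12\,\mathrm{diam}(E)$ is too weak; instead one wants to exploit that for convex $E$ the function $\log d_E$ is subharmonic and has a representing measure $\sigma_E$ supported off $E$ with total mass $1$, so that $\tau(E)=\iint\log|z-t|\,d\sigma_E(t)\,d\mu_E(z)=\int U^{\mu_E}(t)\,d\sigma_E(t)$ where $U^{\mu_E}$ is the logarithmic potential of the equilibrium measure. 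Since $\mathrm{cap}(E)=1$, $U^{\mu_E}\equiv 0$ on $E$ and $U^{\mu_E}(t)=g_{\Omega_E}(t,\infty)>0$ for $t\in\Omega_E$, where $g$ is the Green function. Hence $\tau(E)=\int g_{\Omega_E}(t,\infty)\,d\sigma_E(t)$, and I must show this Green-energy-type integral is at least $\log 2$. For the disk $D(0,1)$ (capacity $1$) one computes $\sigma_D$ is the point mass balanced so that $\log d_D(z)=\log(1+|z|)$ gives $\sigma_D$ uniform on a circle of radius... — in any case the disk yields exactly $\log 2$, so the claim is that the disk minimizes this functional among convex sets of fixed capacity.

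To prove the minimality I would use a symmetrization/monotonicity argument. Circular symmetrization (Szegő) decreases capacity-normalized functionals of this type: it fixes $\mathrm{cap}$ or increases it, decreases $d_E(z)$ at symmetrized points, and one checks it does not increase $\tau$; iterating symmetrizations in all directions drives $E$ toward a disk, giving $\tau(E)\ge\tau(D)=\log 2$. Alternatively, and perhaps cleaner, apply Corollary \ref{cor2.3}: for convex $E$ the set $E^*=\bigcap_{z\in\partial E}D(z,d_E(z))$ contains $E$, and enlarging $E$ within $E^*$ does not increase $M_E$; one shows the maximal such enlargement is a disk, whence $M_E\ge M_{E^*}=M_{\text{disk}}=2$, with equality forcing $\mathrm{cap}(\Omega_E\setminus\Omega_{E^*})=0$, i.e. $E$ (more precisely $\ov\C\setminus\Omega_E$) is itself that disk. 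I expect the main obstacle to be the equality analysis: showing that $\tau(E)=\log 2$ forces $\widehat E$ to be an exact disk requires the strict monotonicity in Theorem \ref{thm2.1}/Corollary \ref{cor2.3} together with an argument that among convex sets only disks are fixed points of the relevant symmetrization or satisfy $E=E^*$ up to capacity zero—this rigidity statement, rather than the inequality itself, is the delicate part.
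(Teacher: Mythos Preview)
Your reduction to the convex case via Corollary~\ref{cor2.4} and the normalization $\mathrm{cap}(E)=1$ are exactly what the paper does, and your Green-function rewriting $\tau(E)=\int g_{\Omega_E}(t,\infty)\,d\sigma_E(t)$ is correct (it is the computation in the proof of Theorem~\ref{thm2.1}). But from that point on neither of your two proposed routes closes the argument.

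The $E^*$ route contains a concrete error: for a general convex $E$, the set $E^*=\bigcap_{z\in\partial E}D(z,d_E(z))$ is \emph{not} a disk. Take $E=[-2,2]$: then $d_E(x)=2+|x|$, so $E^*$ is the intersection of $D(0,2)$, $D(2,4)$, $D(-2,4)$, etc., which is a lens-shaped region, not a disk. So ``the maximal such enlargement is a disk'' is false, and Corollary~\ref{cor2.3} alone does not drive you to a disk. The symmetrization route is not wrong a priori, but ``one checks it does not increase $\tau$'' hides all the difficulty: circular symmetrization does not have an obvious monotone effect on the farthest-point distance function $d_E$ or on the representing measure $\sigma_E$, and you have not supplied that check.

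What you are missing is the actual content of Solynin's argument, which you name but do not use. After passing to convex $E$ with $\mathrm{cap}(E)=1$, let $\Psi(w)=w+a_0+\sum_{k\ge 1}a_kw^{-k}$ be the conformal map of $\{|w|>1\}$ onto $\Omega_E$, extended to the boundary. The equilibrium measure pulls back to normalized arclength on $\mathbb{T}$, so $\tau(E)=\frac{1}{2\pi}\int_0^{2\pi}\log d_E(\Psi(e^{it}))\,dt$. The key pointwise bound is the trivial $d_E(\Psi(e^{it}))\ge |\Psi(e^{it})-\Psi(-e^{it})|$. Now $H(w):=(\Psi(w)-\Psi(-w))/w$ is analytic and nonvanishing in $\{|w|>1\}$ with $H(\infty)=2$, so by the mean-value property of $\log|H|$,
\[
\tau(E)\ \ge\ \frac{1}{2\pi}\int_0^{2\pi}\log|H(e^{it})|\,dt\ =\ \log|H(\infty)|\ =\ \log 2.
\]
For the equality case the paper does not use symmetrization rigidity at all: equality forces $d_W(\Psi(e^{it}))=|\Psi(e^{it})-\Psi(-e^{it})|$ for every $t$, i.e.\ the farthest boundary point from each boundary point is its ``antipode'' under $\Psi$. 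A separate complex-analytic lemma (Lemma~\ref{lem6.1}), proved via the Julia--Wolff lemma applied to an auxiliary two-variable function, shows that a Jordan domain with this antipodal property must be a disk. Your intuition that the rigidity is the delicate part is right, but the mechanism is this conformal-map lemma, not a fixed-point-of-symmetrization statement.
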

In other words, $M_E=2$ only for sets whose polynomial convex hull
is a disk. This may also be described by saying that $M_E = 2$ if
and only if $\partial U \subset E \subset U$, where $U$ is a closed
disk.

Proving that the maximum of $M_E$ for {\em arbitrary} continua is
attained for a segment is a more difficult problem. In fact, it is
related to some old open problems on the moments of the equilibrium
measure (or circular means of conformal maps), see P\'olya and
Schiffer \cite{PS}, and Pommerenke \cite{Po}. In particular, we use
the results of \cite{PS} and \cite{Po} to show that

\begin{theorem} \label{thm2.6}
Let $E \subset {\C}$ be a connected compact set, ${\rm cap} (E)
>0$.\\
\textup{(i)} If the center of mass $c:=\int z\,d\mu_E(z)$ for
$\mu_E$ belongs to $E$, then
\begin{align} \label{2.2}
M_E  < 2+4.02/\pi \approx 3.279606.
\end{align}
\textup{(ii)} If $E$ is convex then
\begin{align} \label{2.3}
M_E  < 2+4/\pi \approx 3.27324.
\end{align}
\end{theorem}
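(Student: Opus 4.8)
The plan is to express $M_E$ in terms of the Green's function / conformal map data and reduce both inequalities to estimates on a single moment of the equilibrium measure, then invoke the known moment bounds of P\'olya--Schiffer and Pommerenke. Normalize $\mathrm{cap}(E)=1$ and let $\phi:\ov\C\setminus\Omega_E\to\ov\C\setminus\DB$ be the exterior conformal map with $\phi(\infty)=\infty$, $\phi'(\infty)=1$; write its inverse as $\psi(w)=w+\sum_{k\ge 0}a_k w^{-k}$ on $|w|>1$. The equilibrium measure $\mu_E$ is the push-forward of normalized arclength on $\RD$ under $\psi$, so $c=\int z\,d\mu_E=a_0$, and by translating we may assume $a_0=0$ (this does not change $M_E$). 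The key reduction is to control $\tau(E)=\int\log d_E(z)\,d\mu_E(z)$ from above. Since $\log d_E$ is subharmonic and, by Corollary~\ref{cor2.3}/\ref{cor2.4}, passing to $\mathrm{conv}(H)$ only increases the relevant quantities in the direction we do \emph{not} want, we instead bound $d_E(z)\le \mathrm{diam}(E)$ pointwise only as a crude fallback; the sharper route is to use $d_E(\psi(e^{i\theta}))\le |\psi(e^{i\theta})-\psi(e^{i\theta'})|$ maximized over $\theta'$, together with the fact that $\mathrm{diam}(E)=\max_{\theta,\theta'}|\psi(e^{i\theta})-\psi(e^{i\theta'})|$ relates to $\sum|a_k|$-type sums.

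Concretely, I would proceed as follows. First, for part (ii) with $E$ convex: by Corollary~\ref{cor2.2} it suffices to bound $M_E$ when $E$ is a convex body, and for convex $E$ one has the sharp containment $E\subset D(c_0, R)$ with $R$ controlled by the second coefficient of $\psi$; Pommerenke's estimate \cite{Po} gives $\sum_{k\ge 1} k|a_k|^2 \le$ an explicit bound, and for convex maps $|a_1|\le$ a sharper constant. From $\log d_E(z)\le \log 2 + \log\mathrm{rad}$, integrating against $d\mu_E$ and exponentiating, one gets $M_E \le 2\cdot\exp(\text{moment term})$, and the moment term is bounded by $4/\pi$ after optimizing, using $\log(1+x)\le x$ and the $L^1$-bound on the fluctuation of $|\psi(e^{i\theta})|$ around its mean. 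Second, for part (i): the hypothesis $c\in E$ means (after the normalization $c=0$) that $0\in E$, hence $0\in \ov\C\setminus\Omega_E$, so $d_E(z)\ge |z|$ is no longer the obstruction; instead $\mathrm{diam}(E)\le 2\max_{z\in E}|z| = 2\|\psi\|_{\RD}$, and the weaker constant $4.02/\pi$ comes from the (weaker, non-convex) P\'olya--Schiffer moment inequality \cite{PS} for $\int|z|^2 d\mu_E(z)=\sum k|a_k|^2$ in place of the convex one.

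The main obstacle I anticipate is \emph{not} the potential-theoretic bookkeeping but getting the precise numerical constants $4/\pi$ and $4.02/\pi$ out of the moment inequalities: one must translate a bound on a quadratic moment $\sum_{k\ge1}k|a_k|^2$ (which is what \cite{PS},\cite{Po} supply) into a bound on $\int \log d_E\,d\mu_E$, and this requires a careful pointwise estimate of $d_E(\psi(e^{i\theta}))$ in terms of the Fourier coefficients $a_k$ — essentially showing $d_E(\psi(e^{i\theta}))\le 2 + (\text{something like }\sum_{k}(k+1)|a_k|)$ and then applying Cauchy--Schwarz to pass to the $\ell^2$ moment. The subtlety is that a naive $\ell^1$ bound on $\sum|a_k|$ is not available from \cite{PS},\cite{Po}; one only gets $\sum k|a_k|^2$, so the argument must exploit that we integrate $\log d_E$ (concave, so Jensen helps) rather than $d_E$ itself, turning the $\ell^2$ moment bound into a bound on $\exp\!\big(\int\log d_E\,d\mu_E\big)$ directly. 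Matching this against the stated decimals $3.27324$ and $3.279606$ will require tracking the constant in Pommerenke's theorem (for (ii), the convex case) versus the constant $1$ in the P\'olya--Schiffer conjecture-resolved-to-$1.02$-or-so range (for (i)), which is presumably exactly where the slightly-worse $4.02/\pi$ originates. I would isolate this as a separate lemma of the form: if $\mathrm{cap}(E)=1$, $c=0$, and $\sum_{k\ge1}k|a_k|^2\le B$, then $M_E\le 2+2B$ (or $2e^{B}$, whichever the computation yields), and then plug in $B=1$ for the convex case and $B=1.005$ for case (i).
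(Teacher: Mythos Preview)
Your normalization ($\mathrm{cap}(E)=1$, $a_0=0$) and the instinct to invoke Jensen are right, but the argument is built around the wrong moment and misses the simple pointwise bound that drives the paper's proof. The results of P\'olya--Schiffer \cite{PS} and Pommerenke \cite{Po} being quoted here do \emph{not} control the quadratic quantity $\sum_{k\ge 1} k|a_k|^2$ (that is the area theorem, which always gives $\le 1$ and is useless for distinguishing the two cases). What they bound is the \emph{first} absolute moment
\[
\int |z|\,d\mu_E(z)=\frac{1}{2\pi}\int_0^{2\pi}|\Psi(e^{it})|\,dt,
\]
with the value $4/\pi$ for convex $E$ (P\'olya--Schiffer) and the slightly weaker $4.02/\pi$ in the general case (Pommerenke). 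Your proposed lemma ``$\sum k|a_k|^2\le B \Rightarrow M_E\le 2+2B$'' is therefore aiming at the wrong hypothesis, and the Cauchy--Schwarz step you worry about is not needed at all.

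The missing idea is much more elementary than anything you outlined: once $a_0=0$, one has $E\subset D(0,2)$ (this is a standard consequence of the normalization, see Pommerenke's book), and hence the trivial triangle-inequality bound $d_E(z)\le 2+|z|$ for $z\in E$. Then Jensen (concavity of $\log$) gives directly
\[
\log M_E=\int\log d_E(z)\,d\mu_E(z)\le\int\log(2+|z|)\,d\mu_E(z)<\log\Bigl(2+\int|z|\,d\mu_E(z)\Bigr),
\]
and plugging in the first-moment bounds yields $M_E<2+4/\pi$ (convex) and $M_E<2+4.02/\pi$ (case (i)). No coefficient sums, no Cauchy--Schwarz, no passage from $\ell^2$ to $\ell^1$. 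Incidentally, you also have the attributions reversed: P\'olya--Schiffer is the convex result ($4/\pi$), Pommerenke the general one ($4.02/\pi$).
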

This should be compared with $M_{[-2,2]} = M_{[-1,1]} \approx
3.2099123.$

After this paper had been written, a new related manuscript
\cite{BLP} appeared. That manuscript contains a proof of our
conjecture $M_E \le M_{[-2,2]}$ for centrally symmetric continua, as
well as another quite general conjecture (if true) implying $M_E \le
M_{[-2,2]}$ holds for all continua.

\section{Refined problem} \label{sec5}

The constant $M_E$ represents the base of rather crude exponential
asymptotic for the constant in inequality \eqref{1.1}. A more
refined question is to find the sharp constant attained with
equality. Such constants are known in the case of a segment, see
\eqref{1.4} and \cite{Bor}; and in the case of a disk, see
\eqref{1.10} and \cite{KP}. Let $E$ be any compact set in the plane,
and let $\prod_{k=1}^m p_k(z) = \prod_{j=1}^n (z-z_j),$ where
$p_k(z)$ are arbitrary monic polynomials with complex coefficients.
Define the constant
\begin{align} \label{5.1}
C_E(n) := \sup_{p_k} \frac{\dis\prod_{k=1}^m \norm{p_k}_E}
{\norm{\dis\prod_{k=1}^m p_k}_E} = \sup_{z_j\in\C}
\frac{\dis\prod_{j=1}^n \norm{z-z_j}_E} {\norm{\dis\prod_{j=1}^n
(z-z_j)}_E}.
\end{align}
If cap$(E)>0$ then it follows from Theorem \ref{thm1.1} that $1 \le
C_E(n) \le M_E^n.$ The refined version of our conjecture in
\eqref{1.19} is as follows:
\begin{align} \label{5.2}
2^{n-1}=C_D(n) \le C_E(n) \le C_{[-2,2]}(n) = 2^{n -1} \prod_{k
=1}^{[n/2]} \left( 1 + \cos \frac{2k -1}{2n} \pi \right)^2
\end{align}
for any connected compact set $E$ of positive capacity.

\section{Proofs} \label{sec6}

\begin{proof}[Proof of Theorem \ref{thm2.1}]

Since $I\subset E$, we have that $\textup{cap}(E)\ge\textup{cap}(I)
> 0$. Let $g_E(z,\infty)$ and $g_I(z,\infty)$ be the Green's
functions for $\Omega_E$ and $\Omega_I$, with poles in infinity. We
follow the standard convention by setting $g_E(z,\infty)=0,\
z\not\in\ov\Omega_E$ and $g_I(z,\infty)=0,\ z\not\in\ov\Omega_I.$ It
follows from the maximum principle that $g_E(z,\infty) \le
g_I(z,\infty)$ for all $z\in\C.$ Furthermore, this inequality is
strict in $\Omega_E$, unless $\textup{cap}(\Omega_I\setminus
\Omega_E)=0.$

Using the integral representation for $d_E(z)$ from Lemma 5.1 of
\cite{Pr1} (see also \cite{LP01} and \cite{GN}) and the Fubini
theorem, we obtain that
\begin{align*}
\log M_E &= \int \log d_E(z)\,d\mu_E(z) - \log \textup{cap}(E) \\ &=
\int \int \log |z-t|\,d\sigma_E(t) d\mu_E(z) - \log \textup{cap}(E)
\\ &= \int \left( \int \log |z-t|\,d\mu_E(z) - \log \textup{cap}(E)
\right) d\sigma_E(t) = \int g_E(t,\infty)\,d\sigma_E(t),
\end{align*}
where the last equality follows from the well known identity\\
$g_E(t,\infty)=\int \log |z-t|\,d\mu_E(z) - \log \textup{cap}(E)$
\cite{Ra}. It is clear that
\[
\int g_E(t,\infty)\,d\sigma_E(t) \le \int
g_I(t,\infty)\,d\sigma_E(t),
\]
with equality possible if and only if
$\textup{cap}(\Omega_I\setminus \Omega_E)=0.$ Indeed, if we have
equality in the above inequality, then $g_E(z,\infty) =
g_I(z,\infty)$ for all $z\in\textup{supp}\,\sigma_E.$ But
$\textup{supp}\,\sigma_E$ is unbounded, so that $g_E(z,\infty) =
g_I(z,\infty)$ in $\Omega_E$ by the maximum principle. Hence we
obtain that
\begin{align*}
\log M_E &\le \int g_I(t,\infty)\,d\sigma_E(t) = \int \left( \int
\log |z-t|\,d\mu_I(z) - \log \textup{cap}(I) \right) d\sigma_E(t)
\\ &= \int \log d_E(z)\,d\mu_I(z) - \log \textup{cap}(I) =
\int \log d_I(z)\,d\mu_I(z) - \log \textup{cap}(I) \\ &= \log M_I,
\end{align*}
with equality if and only if $\textup{cap}(\Omega_I \setminus
\Omega_E)=0.$ Note that we used supp$\,\mu_I \subset
\partial\Omega_I,$ so that $d_E(z)=d_I(z)$ for
$z\in \textup{supp}\,\mu_I.$

\end{proof}

\begin{proof}[Proof of Corollary \ref{cor2.2}]

Let $I=[z,w]$ be the segment connecting the points $z$ and $w$,
i.e., the common diameter of $E$ and the disk containing it. Observe
that we have $d_E(t)=d_I(t)$ for all $t\in\partial\Omega_I=I$ under
the stated geometric conditions. Since all assumptions of Theorem
\ref{thm2.1} are satisfied, we obtain that $M_E\le M_{[z,w]} =
M_{[-2,2]},$ where the last equality follows from the invariance
with respect to the similarity transformations of the plane.

\end{proof}

\begin{proof}[Proof of Corollary \ref{cor2.3}]

Observe that $E\subset D(z,d_E(z))$ for any $z\in\C.$ Hence
$E\subset E^*.$ Since $E\subset H \subset E^*,$ we immediately
obtain that $d_E(z) \le d_H(z) \le d_{E^*}(z),\ z\in\C.$ On the
other hand, the definition of $E^*$ gives that $d_E(z) = d_{E^*}(z)$
for all $z\in\partial\Omega_E.$ Therefore $d_E(z) = d_{H}(z)$ for
all $z\in\partial\Omega_E,$ and the result follows from Theorem
\ref{thm2.1}.

\end{proof}

\begin{proof}[Proof of Corollary \ref{cor2.4}]

We apply Theorem \ref{thm2.1} again, with $I=H$ and
$E=\textup{conv}(H).$ It was shown in \cite{LP01} that $d_H(z) =
d_{\textup{conv}(H)}(z)$ for all $z\in\C$, where $H$ is an arbitrary
compact set. Since $H$ is not convex in our case, we obtain that
$\textup{cap}(\Omega_I\setminus \Omega_E)>0$ and $M_E < M_I.$

\end{proof}

For the proof of Theorem~\ref{thm2.5} we need a special case of the
following lemma, which may be of some independent interest. Let
 $\Delta:=\{w:|w|>1\}$, and $\D:=\{z: |z|<1\}$ the unit disk.

\begin{lemma} \label{lem6.1}
Let $\Gamma$ be a Jordan domain and let $\Psi(z):=
cw+\sum_{k=0}^{\infty}a_kw^{-k}$ be a conformal map of $\Delta$ onto
$\Omega_\Gamma$. Furthermore assume that
\begin{equation}
  \label{eq:1}
  \forall x,z\in\partial\Delta:\quad |\Psi(z)-\Psi(x)|\leq |\Psi(z)-\Psi(-z)|.
\end{equation}
Then $\Gamma$ is a disk.
\end{lemma}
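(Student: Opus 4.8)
The plan is to deduce rigidity from the extremal condition \eqref{eq:1}, which says that for each fixed $z\in\partial\Delta$, the antipodal point $-z$ is the farthest image point from $\Psi(z)$. First I would pass to boundary values: since $\Gamma$ is a Jordan domain, $\Psi$ extends to a homeomorphism of $\ov\Delta$ onto $\ov\Omega_\Gamma$ (equivalently of $\partial\Delta$ onto $\partial\Gamma$), so \eqref{eq:1} is a genuine pointwise statement about the boundary curve. Writing $z=e^{i\theta}$ and $\varphi(\theta):=\Psi(e^{i\theta})$, the hypothesis becomes: for every $\theta$, the function $t\mapsto|\varphi(\theta)-\varphi(t)|$ is maximized at $t=\theta+\pi$. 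In particular, fixing $\theta$ and differentiating in $t$ at the interior maximum $t=\theta+\pi$ gives a first-order (orthogonality) relation, and the second-derivative test gives an inequality; the idea is that these must in fact be forced to hold as rigidly as possible.

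The cleanest route, I expect, is a Fourier/area argument rather than pointwise differential geometry. Set $\delta(\theta):=|\varphi(\theta)-\varphi(\theta+\pi)|$, the length of the "diameter chord." The condition says $|\varphi(\theta)-\varphi(\theta+t)|\le\delta(\theta)$ for all $t$; by symmetry of the roles of the two endpoints this also gives $|\varphi(\theta)-\varphi(\theta+\pi)|\le\delta(\theta+\pi)$, hence $\delta(\theta)=\delta(\theta+\pi)$ — the diameter function is $\pi$-periodic. Next I would integrate the defining inequality cleverly: average $|\varphi(\theta)-\varphi(\theta+t)|^2\le\delta(\theta)^2$ over $t\in[0,2\pi]$ and over $\theta$. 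Using the Fourier expansion $\varphi(\theta)=\sum_{k\le 1}c_k e^{ik\theta}$ (finitely many positive modes because $\Psi(w)=cw+\sum_{k\ge0}a_k w^{-k}$, so $c_1=c$, $c_k=a_{-k}$ for $k\le 0$), one computes $\frac1{2\pi}\int|\varphi(\theta)-\varphi(\theta+t)|^2\,dt=2\sum_{k}|c_k|^2\cdot(\text{something independent of }\theta)$ — more precisely $\frac{1}{2\pi}\int_0^{2\pi}|\varphi(\theta)-\varphi(\theta+t)|^2 dt=2\sum_{k\ne 0}|c_k|^2$ is a \emph{constant}, while $\delta(\theta)^2=|\sum_{k\ \text{odd}}c_k(1-e^{ik\pi})e^{ik\theta}|^2=4|\sum_{k\ \text{odd}}c_k e^{ik\theta}|^2$. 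So the pointwise inequality forces
\[
\sum_{k\ne 0}|c_k|^2\ \le\ 2\Bigl|\sum_{k\ \text{odd}}c_k e^{ik\theta}\Bigr|^2\quad\text{for all }\theta,
\]
and integrating the right side in $\theta$ gives $\sum_{k\ne0}|c_k|^2\le 2\sum_{k\ \text{odd}}|c_k|^2$, i.e. $\sum_{k\ \text{even},\,k\ne0}|c_k|^2\le\sum_{k\ \text{odd}}|c_k|^2$. That alone is not enough, so the real point is to exploit that the inequality must hold for \emph{every} $\theta$, not just on average: the right-hand side is a nonconstant trigonometric polynomial unless only one odd mode survives, and near its minimum in $\theta$ the inequality is violated unless all other modes vanish. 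Chasing this through should force $c_k=0$ for all $k\notin\{1,-1\}$ (using univalence to rule out $c_{-1}$ alone, or to pin down the relation between $c_1,c_{-1}$), so that $\Psi(w)=c_1 w+c_0+c_{-1}w^{-1}$; univalence of $\Psi$ on $\Delta$ then forces $c_{-1}=0$ (a Joukowski-type map is not injective on $|w|>1$ when $|c_{-1}|\ge|c_1|$, and the extremal equality case degenerates the image to a segment, not a Jordan domain), leaving $\Psi(w)=c_1w+c_0$, whence $\Gamma$ is a disk.

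I expect the main obstacle to be the passage from the averaged Fourier inequality to killing \emph{all} higher modes: the crude $L^2$-averaging only yields a one-sided balance between even and odd modes, and one must genuinely use the "for all $\theta$" strength. The fix is to choose $\theta$ adversarially — pick $\theta_0$ where $\bigl|\sum_{k\ \text{odd}}c_ke^{ik\theta}\bigr|^2$ is as small as possible; if more than one mode is present this minimum is strictly below the average $\sum_{k\ \text{odd}}|c_k|^2$, and combined with the inequality at $\theta_0$ this squeezes $\sum_{k\ne0}|c_k|^2$ strictly below $2\sum_{k\ \text{odd}}|c_k|^2$ in a way that, iterated or combined with the conjugate inequality at $\theta_0+\pi/2$, forces every $c_k$ with $|k|\ge 2$ to be zero. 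An alternative, possibly cleaner, obstacle-avoiding route is purely geometric: the hypothesis says each boundary point of $\Gamma$ has its antipode (under the conformal parametrization) as a diametrically opposite farthest point, so every such chord is a "double normal" and the curve has constant width equal to $\delta$; but a convex curve all of whose diametral chords have the \emph{same} parametric antipode and for which the width function is constant and the farthest-point map is this rigid involution must be a circle. Either way, once $\Psi$ is affine the conclusion is immediate. I would present the Fourier argument as the primary proof since it interfaces directly with the analytic setup $\Psi(w)=cw+\sum a_kw^{-k}$ already in the statement.
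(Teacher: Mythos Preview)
Your Fourier approach contains a concrete computational error that undermines the whole strategy. You assert that
\[
\frac{1}{2\pi}\int_0^{2\pi}|\varphi(\theta)-\varphi(\theta+t)|^2\,dt \;=\; 2\sum_{k\ne 0}|c_k|^2
\]
is \emph{constant} in $\theta$. It is not: a direct calculation (substitute $u=\theta+t$ and expand) gives
\[
\frac{1}{2\pi}\int_0^{2\pi}|\varphi(\theta)-\varphi(u)|^2\,du \;=\; |\varphi(\theta)-c_0|^2+\sum_{k\ne 0}|c_k|^2,
\]
which varies with $\theta$ unless $\partial\Gamma$ is already a circle centered at $c_0$. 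Consequently your ``pointwise'' inequality $\sum_{k\ne 0}|c_k|^2\le 2\bigl|\sum_{k\ \textup{odd}}c_ke^{ik\theta}\bigr|^2$ is not what the $t$-average actually yields. More damagingly, even the corrected pointwise inequality is genuinely satisfied by non-circular curves: for the ellipse $\Psi(w)=w+a/w$ with real $a\in(0,1)$ one checks that the $t$-averaged inequality holds for all $\theta$ whenever $|a|$ is small enough, so no amount of averaging can force $a=0$. Your proposed fix---choose $\theta_0$ adversarially and ``iterate''---is not a proof, and there is no mechanism indicated that would kill, say, $c_{-1}$. The geometric alternative is likewise only a heuristic: the hypothesis does not obviously imply constant width, and in any case bodies of constant width (Reuleaux triangles) are not circles, so the extra ``conformal antipode'' structure would have to do all the work, which you do not supply.

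The paper's proof proceeds along entirely different lines, and the comparison is instructive. It transports the problem to the unit disk via $g(z):=1/\Psi(1/z)$, rewrites \eqref{eq:1} as the statement that a certain \emph{two-variable} function
\[
F(x,z)=\frac{xg(-z)}{g(-xz)}\cdot\frac{g(-xz)-g(z)}{g(-z)-g(z)}
\]
is analytic on $\D^2$ and bounded by $1$ there, with $F(1,z_0)=1$ for every $z_0$. The Julia--Wolff boundary lemma then forces the angular derivative $\partial_x F(1,z_0)$ to be a positive real number; since this holds for a continuum of $z_0$, the identity principle converts this real-valuedness into an actual functional equation $2w\Psi'(w)=\Psi(w)-\Psi(-w)$, which one solves to get $\Psi(w)=cw+a_0$. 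The key idea you are missing is that \eqref{eq:1} is an extremal condition at the boundary of an analytic family, and boundary Schwarz-type rigidity (Julia--Wolff), not $L^2$-averaging, is the right tool to extract an equation from it.
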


\begin{proof}
  First note that by Carath\'eodory's theorem \cite[p. 18]{Po92} $\Psi$
extends to a homeomorphism  of $\overline{\Delta}$, so that \eqref{eq:1}
makes
sense. Also there is no loss of generality in assuming $0\in\Gamma$,
so that $\Psi(z)\neq 0$ in $\overline{\Delta}$. Let
$$
g(z):=\frac{1}{\Psi(1/z)},\quad z\in\DB.
$$
Then $g(z)=z/c+\sum_{k=2}^{\infty}b_kz^k$ is a homeomorphism of $\DB$
onto the closure of the Jordan domain $\Gamma^*$, the interior
domain of the Jordan curve $1/\partial\Gamma$. Note that $g(0)=0,
g'(0)=1/c\neq0$.

Let $1/z\in\RD$, and in \eqref{eq:1} we replace $1/x\in\RD$ by $-1/xz$ which
is also in $\RD$. Condition \eqref{eq:1} then becomes
$$
1\geq\left|
  \frac{\frac{1}{g(z)}-\frac{1}{g(-xz)}}{\frac{1}{g(z)}-\frac{1}{g(-z)}}
\right|=
\left|
  \frac{xg(-z)}{g(-xz)}\frac{g(-xz)-g(z)}{g(-z)-g(z)}
\right|,\quad x,z\in\RD.
$$
Note that the function
$$
F(x,z):=\frac{xg(-z)}{g(-xz)}\frac{g(-xz)-g(z)}{g(-z)-g(z)}
$$
is analytic in $(x,z)\in\D^2$, and by the maximum principle, applied
to both variables separately, we find that
$$
|F(x,z)|\leq1, \quad x,z\in\DB.
$$
Now fix $z_0$ with $0<|z_0|<1$. Then $x\mapsto F(x,z_0)$ is analytic
in $\DB$, satisfies $ |F(x,z_0)|\leq1$ for $x\in\DB$, and, in
addition,  $F(1,z_0)=1$. The Julia-Wolf Lemma \cite[p. 82]{Po92}
then says that $F'(1,z_0)>0$, or
$$
1+\frac{-z_0g'(-z_0)}{g(-z_0)}\frac{g(z_0)}{g(-z_0)-g(z_0)}>0.
$$
Obviously this must be true for any $z_0$, and so, by the identity
principle, we are left with the relation
$$
\frac{-zg'(-z)}{g(-z)}\frac{g(z)}{g(-z)-g(z)}\equiv\alpha,\quad
  z\in\D,
$$
where $\alpha>-1$ is some real constant. Letting $z\rightarrow 0$,
we find $\alpha=-\frac{1}{2}$. Hence we are left with the
difference-differential equation
\begin{equation}
  \label{eq:2}
  \frac{zg'(z)}{g(z)}\frac{g(-z)}{g(-z)-g(z)}=\frac{1}{2},\quad
  z\in\D.
\end{equation}
In terms of $\Psi$ this reads
$$
2w\Psi'(w)=\Psi(w)-\Psi(-w),\quad w\in \Omega_\Gamma.
$$
From this we conclude that $w\Psi'(w)$ is an odd function, which, in
turn, implies that $\Phi(w):=\Psi(w)-a_0$ is odd as well. For $\Phi$
we then get the equation $w\Phi'(w)=\Phi(w)$, or $\Phi(w)=cw$. This
implies $\Psi(w)=cw+a_0$ and therefore that $\Gamma$ is a disk.
\end{proof}

\begin{proof}[Proof of Theorem \ref{thm2.5}]

Note that for any compact set $E$, we have $M_E=M_W$, where
$W:=\ov\C\setminus\Omega_E$. This follows because $\mu_E=\mu_W$
\cite{Ra} and $d_E(z) = d_W(z), \ z\in\C.$ Corollary \ref{cor2.4}
now implies that
\[
\inf\{ M_E: E \mbox{ is compact}\} = \inf\{ M_H: H \mbox{ is convex
and compact}\}.
\]
Hence we can assume that $E$ is convex from the start. We also set
cap$(E)=1,$ because $M_E$ is invariant under similarity transforms.
Thus $\partial E$ is a rectifiable Jordan curve (or a segment when
$E=\partial E$). The following argument that shows $M_E \ge 2$ for
all connected sets is due to A. Solynin. Let
$\Psi:\Delta\to\Omega_E$ be the standard conformal map:
\[
\Psi(w)=w+a_0+\sum_{k=1}^{\infty} \frac{a_k}{w^k},\qquad w\in\Delta.
\]
Recall that $\Psi$ can be extended as a homeomorphism of $\ov\Delta$
onto $\ov\Omega_E,$ with $\Psi(\T)=\partial E,\ \T:=\partial\Delta.$
It is clear that
\[
d_E(\Psi(e^{it})) \ge |\Psi(e^{it})-\Psi(-e^{it})|,\qquad
t\in[0,2\pi).
\]
Since $\Psi(w)$ is univalent in $\Delta,$ the function
\[
H(w):=\frac{\Psi(w)-\Psi(-w)}{w}
\]
is analytic and non-vanishing in $\Delta$, including $w=\infty.$
Furthermore, $H(\infty):=\dis\lim_{w\to\infty} H(w) = 2.$ It follows
that $h(w):=\log |H(w)|$ is harmonic in $\Delta.$ Recall that the
equilibrium measure $\mu_E$ is the harmonic measure of $\Omega_E$ at
$\infty,$ which is invariant under the conformal transformation
$\Psi,$ see \cite{Ra}. Hence
\begin{align*}
\log M_E &= \int \log d_E(z)\,d\mu_E(z) = \frac{1}{2\pi}
\int_0^{2\pi} \log d_E(\Psi(e^{it}))\,dt \\ &\ge \frac{1}{2\pi}
\int_0^{2\pi} \log \left| \frac{\Psi(e^{it})-\Psi(-e^{it})}{e^{it}}
\right|\,dt = \log 2,
\end{align*}
where we used the Mean Value Theorem for $h(w)$ on the last step.
Thus we conclude that $M_E \ge 2=M_D$ holds for all compact sets
$E.$

Recall that $M_E=M_W$, where $W=\ov\C\setminus\Omega_E$. If $M_E=2$
then $M_W=2$, so that $W$ must be convex by Corollary \ref{cor2.4}.
Since $M_W>3.2$ for any segment, we have that $W$ is the closure of
a convex domain. We can assume that cap$(W)=1$ after a dilation.
Repeating the above argument for $W$ instead of $E$, we obtain that
\begin{align*}
\log 2 &= \log M_W = \frac{1}{2\pi} \int_0^{2\pi} \log
d_W(\Psi(e^{it}))\,dt \\ &\ge \frac{1}{2\pi} \int_0^{2\pi} \log
\left| \Psi(e^{it})-\Psi(-e^{it}) \right|\,dt = \log 2.
\end{align*}
It follows that
\[
\int_0^{2\pi} \left( \log d_W(\Psi(e^{it})) - \log \left|
\Psi(e^{it})-\Psi(-e^{it}) \right| \right)\,dt = 0,
\]
and that $d_W(\Psi(e^{it})) = \left| \Psi(e^{it})-\Psi(-e^{it})
\right|$ a.e. on $[0,2\pi).$ But these functions are clearly
continuous, so that
\[
d_W(\Psi(e^{it})) = \left| \Psi(e^{it})-\Psi(-e^{it}) \right| \quad
\forall t\in\R.
\]
An application of Lemma~\ref{lem6.1} with $\Gamma$ the interior
domain of $W$ shows that $W$ must be a disk. We would also like to
mention that A. Solynin obtained a different proof of the fact that
$M_E=2$ for a connected set $E$ implies $W$ is a disk.
\end{proof}

\begin{proof}[Proof of Theorem \ref{thm2.6}]

Recall that $M_E$ is invariant under similarity transformations.
Hence we can assume again that cap$(E)=1$ and $\dis \int z \,
d\mu_E(z) = 0.$ The latter condition means that the center of mass
for the equilibrium measure is at the origin. If we introduce the
conformal map $\Psi:\Delta\to\Omega_E$, as in the previous proof,
then this condition translates into $a_0=0$, i.e.,
\[
\Psi(w)=w+\sum_{k=1}^{\infty} \frac{a_k}{w^k},\qquad w\in\Delta.
\]
Theorem 1.4 of \cite[p. 19]{Po75} gives that $E\subset D(0,2)$, so
that $d_E(z) \le 2 + |z|, \ z\in E,$ by the triangle inequality.
Note that this is sharp for $E=[-2,2]$. Applying Jensen's
inequality, we have
\[
\log M_E = \int \log d_E(z)\,d\mu_E(z) \le \int \log(2+|z|)\, d
\mu_E (z) < \log\left(2 + \int |z|\, d \mu_E (z)\right).
\]
Estimates \eqref{2.2} and \eqref{2.3} now follow from the results of
Pommerenke \cite{Po}, and of P\'olya and Schiffer \cite{PS}, who
estimated the integral
\[
\int |z|\, d \mu_E (z) = \frac{1}{2\pi} \int_0^{2\pi}
|\Psi(e^{it})|\,dt < 4.02/\pi \quad(\mbox{or }\le 4/\pi),
\]
under the corresponding assumptions.

\end{proof}

\end{document}